\documentclass[12pt]{amsart} 
\usepackage{amsmath, amssymb, amsfonts, amstext, amsthm, amscd}
\usepackage{mathabx}
\usepackage{tikz-cd}
\usetikzlibrary{decorations.pathmorphing}
\tikzset{squiggly/.style={decorate, decoration={zigzag,amplitude=0.5mm,segment length=2mm}}}
\usepackage[mathscr]{euscript}
\usepackage{enumerate}
\usepackage{tikz,color,soul}
\usepackage{hyperref}
\usepackage{a4wide}

\usepackage[left=2.5cm,top=2.5cm,bottom=3cm,right=2.5cm]{geometry}

\newtheorem{theorem}{Theorem}[section]

\theoremstyle{definition}

\newtheorem{remark}[theorem]{Remark}
\newtheorem{example}[theorem]{Example}

\newcommand{\Pe}{\mathcal{P}}

\theoremstyle{plain}
\numberwithin{equation}{section}
\newtheorem*{theorem*}{Theorem}

\newtheorem{lemma}[theorem]{Lemma}
\newtheorem{corollary}[theorem]{Corollary}

\theoremstyle{definition}

\numberwithin{equation}{section}

\title[Lefschetz Pencils on  surfaces and an application to Seshadri constants]{Lefschetz Pencils on  surfaces and an application to Seshadri constants}
 \author[K. Hanumanthu]{Krishna Hanumanthu}
\address{Chennai Mathematical Institute, H1 SIPCOT IT Park, Siruseri, Kelambakkam 603103, India}
 \email{krishna@cmi.ac.in}

 \author[A. J. Parameswaran]{A. J. Parameswaran}
 
 \address{Kerala School of Mathematics, Kunnamangalam PO, Kozhikode, Kerala, 673571, India}
 \email{param@ksom.res.in}

 \subjclass[2020]{14C20, 14D05, 14F45, 14N05, 32S30, 32S50}

 \keywords{Lefschetz pencils, Seshadri constants, Linear systems of curves, Vanishing cycles}

\date{\today}
\begin{document}
\begin{abstract}
Let $X$ be a nonsingular complex projective surface and let $L$ be a very ample line
bundle on $X$. We study the question: when does a general pencil in the linear system 
$|L|$ consist of only irreducible and reduced curves? We give a precise condition which guarantees an affirmative answer: in fact, this holds if and only if the genus of a general
member of $|L|$ exceeds the irregularity of $X$. The proof is topological, using
vanishing cycles and the theory of Lefschetz pencils. As an application, combined with a construction due to Miranda, we prove that for
any positive rational number $\delta$ there exist a blow up $Y \to X$ at finitely many
points, an ample line bundle $M$ on $Y$ and a point $x \in Y$ such that
$\varepsilon(Y,M,x) = \delta$; Miranda proved this when $X = \mathbb{P}^2$. 
\end{abstract}
\maketitle

\section{Introduction}\label{Intro}

Let $X$ be a smooth complex projective variety and let $L$ be a nef line bundle on
$X$.
The {\it Seshadri constant} of $L$ at $x \in X$ is defined as the real number
$$\varepsilon(X,L,x):=  \inf\limits_{\substack{x \in C}} \frac{L\cdot
  C}{{\rm mult}_{x}C},$$ where the infimum is taken over all
irreducible and reduced curves passing through $x$.
Seshadri constants were defined by Demailly in \cite{D},
motivated by  the Seshadri
criterion for ampleness (\cite[Theorem 7.1]{H})
which says that $L$ is ample if and only if $\varepsilon(X,L,x) > 0$
for all $x \in X$.

It is easy to see that Seshadri constants satisfy the upper bound $\varepsilon(X,L,x) \le \sqrt[n]{L^n}$, where $n$ is the dimension of $X$. 
As mentioned above, by the Seshadri criterion for ampleness, Seshadri constant of an ample line bundle at any point is positive. 
A lot of research is focused on finding better upper and lower bounds for Seshadri constants.

Ein-Lazarsfeld \cite{EL} gave non-trivial lower bounds at general points on surfaces. More precisely, they showed that if $L$ is an ample line bundle on a surface $X$, $\varepsilon(X,L,x) \ge 1$ at a \textit{very general} point $x$. On the other hand, Seshadri constants at arbitrary points can be arbitrarily small as shown by a construction of Miranda; see \cite[Example 3.1]{EL}. Miranda produces such examples on suitable blow ups of the projective plane $\mathbb{P}^2$. Bauer \cite[Proposition 3.3]{B} extended this construction to surfaces of Picard rank 1. 

Both these results use the same idea: start with a curve $C$ which has a point of sufficiently high multiplicity, find a pencil of irreducible and reduced curves containing $C$ and then blow up the surface at the base locus of the pencil. Then a suitably chosen line bundle will have the required small Seshadri constant. The key step is the construction of a pencil of irreducible and reduced curves. On $\mathbb{P}^2$ and more generally on surfaces of Picard number 1, this follows from a dimension count argument.  For arbitrary surfaces, such dimension count arguments do not easily generalize. 

In this note, we generalize the result to arbitrary surfaces. Since the dimension-count arguments used by Miranda and Bauer
no longer apply, we instead study the topology of hyperplane sections directly, using vanishing cycles and the theory of Lefschetz pencils to construct the necessary pencil. 
We answer the following general question: if $X$ is a smooth complex projective surface and $L$ is a very ample line bundle on $X$, when does the linear system $|L|$ contain a pencil of irreducible and reduced curves. 
The answer is given in Theorem \ref{main-top2}, which gives a topological criterion for irreducibility of singular members of Lefschetz pencils. More precisely, we show that the genus of a general member of $|L|$ is greater than the irregularity of $X$ if and only if a general pencil in $|L|$ contains only irreducible and reduced curves. Using this result and Miranda's construction we prove our main result about Seshadri constants in Theorem \ref{main-sc}.

Convex Fujita numbers of surfaces arise in the study of effective global generation of adjoint line bundles on surfaces. In \cite{CKMS}, several results on these numbers are proved. One such result uses pencils of irreducible and reduced curves. We briefly discuss the relevant result of \cite{CKMS} and  mention how we can establish further properties satisfied by the ample line bundles arising in this construction, in connection with convex Fujita numbers. 

Section \ref{bertini} proves a Bertini-type result which shows that 
any two hyperplane sections whose only singularity is a node are homeomorphic.
Section \ref{irreducible} uses vanishing cycles to give a topological criterion for a general pencil to contain only irreducible and reduced curves. We also discuss 
when this topological condition fails.
Section \ref{pencil} constructs pencils with desired properties containing a given curve. Section \ref{high-mult} constructs curves with given multiplicity at a point, and Section \ref{sc} applies the topological constructions of the previous sections and Miranda’s idea to obtain arbitrarily small Seshadri constants on blow ups of any surface.

\section{Bertini-type results for singular hyperplane sections} \label{bertini}

Let $X \subset \mathbb P^n$ be a nondegenerate, smooth complex projective surface. Assume that $n\ge 3$. We recall the definition of the dual variety and some of its properties below.

Let $\widecheck{\mathbb P^n}$ be the projective space of hyperplanes in $\mathbb P^n$. Consider the incidence variety
given by 
\begin{equation}\label{y}
Y := \{(x,H) \in \mathbb P^n \times \widecheck{\mathbb P^n}  \mid x \in X\cap H \}. 
\end{equation}

Then $Y$ is nonsingular since it is a projective bundle over $X$. The fibres of the projection $p_2\big|_Y: Y \to \widecheck{\mathbb P^n}$ are the hyperplane sections of $X$ in $\mathbb P^n$. So $Y$ is also referred to as the universal hyperplane section over $X$.

We also define  
$Z \subset Y$ by
\[ Z = \{(x,H) \in \mathbb P^n \times \widecheck{\mathbb P^n} \mid x \in X\cap H \text{ and } T_xX \subset T_x H \cong H\}.\] 
Here $T_xX$ denotes the tangent space of $X$ at $x$. Note that $Z$ is isomorphic to 
$\mathbb P(T_{\mathbb P^n}|_X/T_X)$, the projective normal bundle. Hence it is a nonsingular variety of dimension $n-1$. 

One can check that $Z$ is the critical locus of the map 
$p_2\big|_Y: Y \to \widecheck{\mathbb P^n}$; see \cite[\S 2]{L}.

The \textit{dual variety} $\widecheck{X}$ of $X$ is defined to be the image of $Z$ in  $\widecheck{\mathbb P^n}$ by the second projection. 
For $H \in \widecheck{\mathbb P^n}$, 
 $X \cap H$ is nonsingular if and only if  $H \notin \widecheck{X}$.

\begin{theorem}\label{dim} \cite[Corollary 7.4]{FL} or \cite[Proposition 3.1]{E}.
The dimension of $\widecheck{X}$ is $n-1$.
\end{theorem}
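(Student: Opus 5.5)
The plan is to show that the projection $\pi := p_2|_Z : Z \to \widecheck{X}$ is generically finite. Since $Z$ is irreducible of dimension $n-1$, this gives $\dim \widecheck{X} = n-1$. If $\pi$ is generically finite, the general fibre is finite, so $\dim \widecheck{X} = \dim Z = n-1$. The argument therefore reduces to showing that for a general point $(x,H) \in Z$, the tangent hyperplane $H$ is tangent to $X$ only at finitely many points (near $x$, only at $x$ itself). Equivalently, the singular locus of the hyperplane section $X \cap H$ is finite for general $H \in \widecheck{X}$.

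Since we are over $\mathbb{C}$ in characteristic zero, I will use reflexivity (biduality): the conormal variety $Z \subset \mathbb P^n \times \widecheck{\mathbb P^n}$ coincides with the conormal variety of $\widecheck{X}$. Concretely, a fibre of $\pi$ over a general point $H \in \widecheck{X}$ is a linear subspace of $\mathbb P^n$, namely the projective space of hyperplanes in $\widecheck{\mathbb P^n}$ tangent to $\widecheck{X}$ at $H$. Its dimension is $n-1-\dim \widecheck{X}$. So if $\dim \widecheck{X} < n-1$, the contact locus $\pi^{-1}(H)$ is a positive-dimensional linear space $\Lambda \subset X$, and $H$ is tangent to $X$ along all of $\Lambda$. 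Since $X$ is a surface, $\Lambda$ is a line, or a plane (excluded since $X$ is nondegenerate and $n \ge 3$).

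It remains to rule out the case where a general tangent hyperplane is tangent along a line $\ell \subset X$. In that case $X$ is covered by lines (a one-parameter family of them), so $X$ is ruled by lines. Moreover, the tangent plane $T_xX$ is constant along the general line $\ell$, because the tangent hyperplanes at points of $\ell$ form a pencil containing $H$ and all of them are tangent along $\ell$. A ruled surface whose tangent plane is constant along each ruling is developable. By the classical classification, it is therefore a cone or the tangent developable of a curve, and such surfaces are singular (at the vertex, respectively along the cuspidal edge), unless they degenerate to a plane. This contradicts smoothness and nondegeneracy.

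The main obstacle is the last step: making rigorous the claim that constancy of the Gauss map along rulings forces singularities. To avoid the classical classification, I would instead argue via the Gauss map $\gamma : X \to G(2,n)$. For a smooth variety in characteristic zero, the fibres of $\gamma$ are linear and $\gamma$ is finite unless $X$ is a linear space (Zak's theorem on finiteness of the Gauss map). Then $\gamma$ constant along $\ell$ contradicts finiteness. Alternatively, since the statement is cited from \cite{FL} and \cite{E}, one could invoke directly their result that a smooth variety with degenerate dual is covered by linear spaces with defect constraints (Landman's parity and Ein's inequality), which rule out the surface case.
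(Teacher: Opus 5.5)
The paper does not prove this statement. It quotes \cite[Corollary 7.4]{FL} and \cite[Proposition 3.1]{E}, so your outline is an independent reconstruction rather than a variant of an argument in the text. Its skeleton is the standard one and is sound:
\begin{itemize}
\item $Z$ is an irreducible $\mathbb P^{n-3}$-bundle over $X$, so it suffices that $Z\to\widecheck{X}$ be generically finite.
\item By reflexivity, the fibre over a general $H\in\widecheck{X}$ is a linear space in $X$ of dimension $n-1-\dim\widecheck{X}$.
\item If $\dim\widecheck{X}<n-1$, this linear space must be a line $\ell$, since $X$ is not a plane; a contradiction then comes from the Gauss map.
\end{itemize}
Over the bare citation, this buys a geometric reason: a smooth surface with defective dual would have to be developable. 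It still rests on reflexivity and, in your preferred ending, on Zak's finiteness theorem. Your last alternative (Landman parity plus Ein's bound) is just the cited result again.

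The step that does not hold up as written is the constancy of $T_yX$ along $\ell$. The hyperplanes containing $T_xX$ form a $\mathbb P^{n-3}$, which is a pencil only when $n=4$. The claim that all of them are tangent along one and the same line is exactly what must be proved, and it is false for smooth varieties in general. On the Segre threefold $\mathbb P^1\times\mathbb P^2\subset\mathbb P^5$, the hyperplanes through $T_xX$ are tangent along different lines $\{a\}\times\ell'$ through $x$, and $T_yX$ varies along each of them.

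The missing ingredient is a fact you mention only in passing: since $X$ is smooth and not a plane, a general point $x$ lies on only finitely many lines of $X$ (otherwise $X$ would be a cone with vertex $x$). With this, the step goes through:
\begin{itemize}
\item For each line $\ell'\ni x$ in $X$, the set of $H'\supseteq T_xX$ tangent along $\ell'$ is closed in $\mathbb P^{n-3}$.
\item A general such $H'$ gives a general point $(x,H')$ of $Z$, so it is tangent along some line through $x$ and lies in one of these finitely many closed sets.
\item By irreducibility of $\mathbb P^{n-3}$, a single line $\ell$ serves for every $H'\supseteq T_xX$.
\item Intersecting all these hyperplanes gives $T_yX\subseteq T_xX$, hence $T_yX=T_xX$, for every $y\in\ell$.
\end{itemize}
With this inserted, Zak's theorem finishes the proof. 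You can also avoid Zak entirely. Constancy of the tangent plane along $\ell$ means $N_{\ell/X}\cong\mathcal O_\ell(1)$, which is unobstructed with $h^0=2$. So $\ell$ moves in a two-dimensional family of lines on $X$, which sweeps out $X$ and contradicts the finiteness of lines through a general point.
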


By Lefschetz double duality theorem \cite[2.2]{L}, the map $Z \to \widecheck{X}$ is birational. Let 





\begin{equation}\label{u,v}
\begin{aligned}
     V &:= \{(x,H) \in Z \mid \text{Sing}(X\cap H) = \{x\} \text{ and } x \text{ is a nodal singularity of } X\cap H \}, \text{ and} \\ U& :=p_2(V)
     \end{aligned}
 \end{equation}

Then $V$ is a non-empty open subset of $Z$ and $U$ is a non-empty open subset of $\widecheck{X}$. Further $p_2\big|_V: V \to U$ is an isomorphism and $V$ is the maximal open subset of $Z$ on which $p_2$ is an isomorphism. These statements follow from \cite[Lemma 4.9]{Lo}. 



Let $Y_U:=p_2^{-1}(U)$. 

The above set-up is summarized in the following diagram. 

\bigskip

\begin{tikzcd}[row sep=huge, column sep=huge]
  & & Y_U := p_2^{-1}(U) \arrow[d, hook] \arrow[ddl] & \\
  V \arrow[urr, hook] \arrow[r, hook] \arrow[rd, "\cong"', "p_2\big|_V"] & Z \arrow[r, hook, crossing over] \arrow[rd, "p_2\big|_Z", two heads, crossing over] & Y \arrow[r, hook] \arrow[rd, "p_2\big|_Y"] & \mathbb{P}^n \times \widecheck{\mathbb{P}}^n \arrow[d, "p_2"] \\
  & U \arrow[r, hook] & \widecheck{X} \arrow[r, hook] & \widecheck{\mathbb{P}}^n
\end{tikzcd}
\bigskip

Note that the fibres of the map $Y_U:=p_2^{-1}(U) \to U$ are singular along $V$ and in fact they have nodal singularities along $V$. 
In an appropriate coordinate system on $Y$ the singularity of a fibre of $Y_U\to U$ is locally given by $xy=0$ and $V$ is given by $x=y=0$. So the pair 
$(Y_U, V)$ is a Whitney stratification of the singular space $Y_U$. 

Note that the map $Y_U \to U$ is proper. Further the restriction 
$Y_U\setminus V \to U$ is submersive because every fibre is smooth and the map $V\to U$ is an isomorphism. So the map $Y_U \to U$ is a proper stratified submersion with respect to the Whitney stratification. 
This implies that $Y_U \to U$ is a locally trivial fibration by 
Thom's First Isotopy Lemma \cite[Proposition 11.1]{M}. 
Since $U$ is connected, all fibres 
of $Y_U \to U$ are homeomorphic.  

As a consequence of this discussion, we obtain the following. 

\begin{theorem}\label{main-top1}
Let $X$ be a nonsingular complex projective  surface embedded non-degenerately in $\mathbb P^n$. Let $X\cap H_1$ and $X \cap H_2$ be  hyperplane sections each of which has a unique singularity which is a node. Then $X\cap H_1$ and $X \cap H_2$ are homeomorphic. In particular, general singular hyperplane sections are homeomorphic. 

\end{theorem}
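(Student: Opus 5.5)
The plan is to read Theorem \ref{main-top1} directly off the stratified fibration $Y_U \to U$ set up above. Let $X\cap H_1$ and $X\cap H_2$ each have a unique singular point, which is a node, say at $x_1$ and $x_2$. First I will check that $(x_i,H_i)\in V$. Since $x_i$ is singular on $X\cap H_i$, we have $T_{x_i}X\subset H_i$, so $(x_i,H_i)\in Z$. The remaining conditions in the definition (\ref{u,v}) of $V$, namely $\mathrm{Sing}(X\cap H_i)=\{x_i\}$ and that $x_i$ is a node, are exactly the hypotheses. Hence $H_1,H_2\in U=p_2(V)$, and $X\cap H_i$ is precisely the fibre of $Y_U\to U$ over $H_i$.

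Next I will use that $U$ is connected. It is a nonempty Zariski open subset of the irreducible variety $\widecheck{X}$, which is the image of the irreducible $Z$, a projective bundle over $X$. A nonempty Zariski open subset of an irreducible complex variety is connected in the analytic topology. Since $Y_U\to U$ is a locally trivial fibration (from Thom's First Isotopy Lemma, as discussed), the set of points of $U$ whose fibre is homeomorphic to a fixed fibre is open. Its complement is also open, so by connectedness it is all of $U$. Thus $X\cap H_1\cong X\cap H_2$.

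The main point needing care is the local triviality itself. That requires the properness of $Y_U\to U$ (it is a restriction of the proper map $Y\to\widecheck{\mathbb P^n}$ to a preimage), the Whitney condition for the pair $(Y_U\setminus V, V)$, and the fact that both strata map submersively. On $V$ this holds because $V\to U$ is an isomorphism. On $Y_U\setminus V$ it holds because $Z$ is exactly the critical locus of $p_2|_Y$ and $Z\cap Y_U=V$. That last equality holds because the fibre over $H\in U$ is singular only at the single point of $V$ over $H$. The Whitney condition follows from the local normal form $xy=0$ along $V$, which is a product of a node with a smooth parameter space; a product stratification of this kind is Whitney. Since the paper has already established these points, I will simply invoke them.

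For the final sentence, a general point of $\widecheck{X}$ lies in the dense open set $U$, because $V$ is nonempty open in $Z$. So a general singular hyperplane section has a single node, and the first part applies.
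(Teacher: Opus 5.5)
Your proposal is correct and follows the paper's own argument. You place $H_1,H_2$ in $U$, apply Thom's First Isotopy Lemma to the proper stratified submersion $Y_U\to U$ with strata $Y_U\setminus V$ and $V$, and conclude from the connectedness of $U$. You also make explicit a few points the paper leaves implicit: that $Z\cap Y_U=V$ (which gives submersivity off $V$), that a nonempty Zariski open subset of the irreducible variety $\widecheck{X}$ is connected, and the open--closed argument showing that local triviality makes all fibres homeomorphic.
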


\begin{remark}
Theorem \ref{main-top1} holds in all dimensions as long as there is a hyperplane section with an isolated singularity, equivalently when $\widecheck{X}$ is a hypersurface in  $\widecheck{\mathbb{P}}^n$.   
\end{remark}

\section{Irreducibility of hyperplane sections in a general pencil}\label{irreducible}

In this section, we prove our main result. It gives 
a criterion for the existence of a pencil of reduced and irreducible curves in the linear system of a very ample divisor on a nonsingular surface. 

\begin{theorem}\label{main-top2}
Let $X$ be a nonsingular complex projective  surface embedded non-degenerately in $\mathbb P^n$, with $n \ge 3$.
Let $Y \subset \mathbb P^n \times \widecheck{\mathbb P^n}$ be as in \eqref{y}. 

Then there exists a subset $R$ of  $\widecheck{\mathbb P^n}$ of codimension at least 2 such that  the fibres of $p_2\big|_Y: Y \to \widecheck{\mathbb P^n}$ over points outside $R$ are reduced and irreducible if and only if the 
following inequality holds for a general hyperplane $H$ in $\mathbb P^n$:
\begin{equation}\label{ineq}
\text{dim } H_1(X\cap H, \mathbb{C}) > 
\text{dim } H_1(X, \mathbb{C}). 
\end{equation}

Moreover, when the condition \eqref{ineq} holds, $R$ can be chosen such that the fibres over points outside $R$ are either nonsingular or irreducible curves 
whose unique singularity is a node. 



\end{theorem}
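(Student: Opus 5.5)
Set $R := (\widecheck{X}\setminus U)\cup R'$, where $R'$ is a proper closed subset of $U$ that I will allow only in the "if" direction. Since $\widecheck{X}$ is an irreducible hypersurface (Theorem \ref{dim}; irreducible because it is the image of the irreducible variety $Z$), and $U$ is a nonempty open subset of it, $\widecheck{X}\setminus U$ has codimension at least $2$ in $\widecheck{\mathbb P^n}$. Over $\widecheck{\mathbb P^n}\setminus\widecheck{X}$ the fibres are smooth hyperplane sections. These are connected because $L$ is ample, so they are irreducible and reduced. Every fibre over $U$ is reduced, since it is a curve with a single node. So the whole question is whether a fibre $C=X\cap H$ with $H\in U$ is irreducible. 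By Theorem \ref{main-top1} all such fibres are homeomorphic, so either every fibre over $U$ is irreducible or none is. Consequently, if the nodal fibres are irreducible we may take $R'=\emptyset$. If they are reducible, any admissible $R$ would have to contain the hypersurface $\widecheck{X}$, which is impossible. It therefore suffices to show that a one-nodal section $C$ is irreducible if and only if \eqref{ineq} holds.

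For this I will use a Lefschetz pencil through $H$, which is possible because a general line through a point of $U$ meets $\widecheck{X}$ transversally elsewhere in $U$. Blow up the base points to obtain $f:\tilde X\to\mathbb P^1$. Let $F$ be a smooth fibre near $C$, and let $\delta\in H_1(F,\mathbb Z)$ be the vanishing cycle attached to $C$. Then $C$ is obtained from $F$ by collapsing $\delta$ to a point.
\begin{itemize}
\item If $\delta\neq 0$ in $H_1(F)$, then $\delta$ is non-separating and $C$ is irreducible, of geometric genus $g-1$.
\item If $\delta$ is null-homologous, then $\delta$ separates $F$, and $C$ is the union of two components meeting at the node, hence reducible.
\end{itemize}
Because all vanishing cycles of the pencil are conjugate under monodromy (the fibres over $U$ being irreducible and the monodromy group acting transitively, as in the classical theory), either every $\delta_i$ is zero in homology or none is.

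Now I invoke the classical Lefschetz theory. The kernel of $H_1(F,\mathbb C)\to H_1(\tilde X,\mathbb C)=H_1(X,\mathbb C)$ is the vanishing subspace $V_{\mathrm{van}}$ spanned by the $\delta_i$. This map is surjective by Lefschetz hyperplane theorem. Hence
\[ \dim H_1(F,\mathbb C)-\dim H_1(X,\mathbb C)=\dim V_{\mathrm{van}}. \]
Condition \eqref{ineq} holds exactly when $V_{\mathrm{van}}\neq 0$, that is, when some $\delta_i\neq0$ over $\mathbb C$. I still need $\delta\neq0$ over $\mathbb Z$ to be equivalent to $\delta\neq0$ over $\mathbb C$. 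This holds because $\delta$ is represented by a simple closed curve, and the class of a simple closed curve on a surface is either zero or primitive. By the conjugacy above, $V_{\mathrm{van}}\ne 0$ is then equivalent to every $\delta_i\neq0$, and hence to every nodal fibre being irreducible. This gives both directions.

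The "moreover" part follows from the construction: outside $R$ the fibres are either smooth or in $U$, i.e. irreducible with a unique node.

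I expect the main obstacle to be the step identifying the kernel with the span of the vanishing cycles and using conjugacy. The kernel statement has to be carried out on the blow-up $\tilde X$ (via $H_2(\tilde X,F)$ and the Leray argument). For the conjugacy claim I would justify that the monodromy group acts transitively on the vanishing cycles by using irreducibility of $\widecheck X$, via the Zariski theorem on $\pi_1$ of the complement of $\widecheck X$, rather than by assuming the fibres over $U$ are irreducible.
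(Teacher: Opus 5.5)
Your proposal is correct and follows the paper's proof. It uses the same choice $R=\widecheck X\setminus U$ and the same appeal to Theorem \ref{main-top1} to make irreducibility of the nodal fibres all-or-nothing. It then combines a local criterion ($\delta\neq 0$ if and only if the nodal fibre is irreducible) with the Lefschetz facts that $H_1(F,\mathbb C)\to H_1(X,\mathbb C)$ is onto with kernel spanned by the vanishing cycles.

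There are two small differences, neither a gap:
\begin{itemize}
\item You prove the local criterion via separating versus non-separating simple closed curves on $F$, using $C\cong F/\delta$. The paper instead uses the exact sequence of the pair $(X_1\cup\Delta_1,X_1)$ and counts components through $\dim H_2(X_0,\mathbb C)$. That works directly over $\mathbb C$ and so avoids your $\mathbb Z$-versus-$\mathbb C$ step, which in any case only needs $H_1(F,\mathbb Z)$ to be torsion-free.
\item Your monodromy-conjugacy step is superfluous, and its parenthetical justification is circular as written. Theorem \ref{main-top1} together with your local criterion already shows that either all $\delta_i$ vanish or none do, which is exactly how the paper argues.
\end{itemize}
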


\begin{proof}


By Theorem \ref{dim}, $\text{dim}(\widecheck{X})=n-1$.
Recall the non-empty open set $U \subset \widecheck{X}$ defined in \eqref{u,v}. 

Let $L$ be a general line in  $\widecheck{\mathbb P^n}$. Then 
$L$ intersects $\widecheck{X}$ transversally and 
$L \cap \widecheck{X} \subset U$. This can be seen by projecting from a general point outside $\widecheck{X}$. 

We claim that $R: = \widecheck{X}\setminus U$ satisfies the conclusions of the theorem.





Fix a point $H_0\in U$ and let $X_0 := X \cap H_0$ be the corresponding hyperplane
section so that $X_0$ has a unique singular point $y_0$, which is a node.

Since $R = \widecheck{X} \setminus U$ is closed in $\widecheck{\mathbb{P}}^n$ and
$H_0 \notin R$, we may choose a ball $B \subset \widecheck{\mathbb{P}}^n$ centered at
$H_0$, of radius small enough that $B \cap \widecheck{X} = B \cap U$ is given by $z_n=0$ where $z_1, z_2, \cdots z_n$ are the coordinates of the ball $B$. Let
$\mathcal{B} := p_2^{-1}(B) \subset Y$. Since $p_2\big|_Y$ is proper, the sets
$p_2^{-1}(B')$, for balls $B' \subseteq B$ centered at $H_0$, form a fundamental
system of neighbourhoods of $X_0$ in $Y$; shrinking $B$ if necessary, we may
therefore assume that $X_0$ is a strong deformation retract of $\mathcal{B}$; 
see \cite[5.3.1]{L}.

Choose $H_1 \in B \setminus \widecheck{X}$ and put $X_1 := X \cap H_1$; this is a
nonsingular curve, since $H_1$ does not lie on the dual variety. Following \cite{L}, we 
recall briefly how
the node of $X_0$ determines a distinguished  class of embedded circles
on $X_1$.

Since $H_0 \in U$ and $B \cap \widecheck{X} = B \cap U$, every fibre over
$B$ has at most one singular point, and that point is a node; moreover
$p_2\big|_V \colon V \to U$ is an isomorphism,  $B \cap \widecheck{X}$ is a ball 
of dimension $n-1$ and the family $\mathcal{B} \to B$ restricted to the curve $D:=\{z_1=z_2=\cdots z_{n-1}=0\}$ is a $1$-parameter smoothing of $X_0$ and the total space of this smoothing is a smooth surface.
Hence there are $r, \varepsilon > 0$ with $\varepsilon \ll r$, local analytic
coordinates $(x,y)$ on $p_2^{-1}(D)$ at $y_0\in X_0$ defined on the ball
$\{|x|^2+ |y|^2 < r^2\}$ centred at $y_0$,  such that near $y_0$ the family is given by
\[
  x^2+y^2 = t, \qquad |x|^2 + |y|^2 < r^2, \quad |t| < \varepsilon.
\]

We may assume that $H_1 \in D$. 
We may also assume that the coordinate $t$ on $D$ vanishes
at $H_0$, and the coordinate $t_1$ of $H_1$ is a positive real number. 
Thus $X_0$ is locally $\{x^2+y^2 = 0\}$, the union of the two branches $x = \pm i y$ where $i$ is a square root of $-1$.

The fibre $\{x^2+y^2 = t_1\}$ with $t_1$ positive real can be described as follows in terms of real coordinates. 
Let $x=u_1+iv_1$ and $y=u_2+iv_2$ be the real representatives of the complex variables. 
Then the fibre over a real number $t_1$ is expressed as
\[
\{ (u,v) \,\,|\,\,\,\, u_1^2-v_1^2+u_2^2-v_2^2 =t_1 \,\,\,\,\,\, u_1v_1+u_2v_2 =0 \,\,\,\,\,\,~u_1^2+v_1^2 + u_2^2 + v_2^2 < r^2 \}
 \]

When $\quad t_1 < \varepsilon \ll r$ it follows that this fibre is isomorphic to a disc bundle as follows: consider the coordinates $u=(u_1, u_2)$ and $v=(v_1,v_2)$. Then the fiber is 
identified as
\[
\{ (u,v)\,\,\, \mid \,\,\,|u|^2 - |v|^2 = t_1 \le |u|^2 + |v|^2 < r^2, \,\,\,and \,\,\, u\cdot v =0~~~\}
\]

This is an annulus, identified as the disc bundle of the tangent bundle of the circle in the coordinates $u$ and its tangent discs in the coordinates $v$. Hence it deformation retracts onto its core circle $|u|^2=t_1$ given by 

\[
  \delta_1 \;=\; \bigl\{ u_1^2+u_2^2=t_1;\,\,\, v_1=v_2=0 \bigr\}.
\]
This is an embedded circle in $X_1$ whose class generates the first homology of 
the annulus. We call $\delta_1$ the \emph{local vanishing cycle} at $y_0$: its radius
$\sqrt{t_1}$ tends to $0$ as $t_1 \to 0$, so $\delta_1$ contracts to the node.
In the total space, $\delta_1$ bounds the \emph{thimble}
\[
  \Delta_1 \;=\; \bigl\{ u_1^2+u_2^2=t;\,\,\, v_1=v_2=0; \,\,\, 0\leq t \leq t_1 \bigr\},
\]
where $t = 0$ gives the single point $y_0$.  $\Delta_1$ is the cone over
$\delta_1$ with apex $y_0$, swept out by the vanishing cycles as $t$ runs from
$t_1$ to $0$. Letting $t \to 0$ and using that $X_0$ is a deformation retract of
$\mathcal{B}$, we conclude that $X_0$ is homotopy equivalent to
$X_1 \cup_{\delta_1} \Delta_1$, i.e.,\ $X_0$ is obtained from $X_1$ by attaching the
$2$-cell $\Delta_1$ along the cycle $\delta_1$; see \cite[Figure~4]{L} for a pictorial representation of this phenomenon. 
More details about this discussion can be found in 
\cite[\S 5]{L}.







We now prove the following claim which relates the vanishing cycles to the irreducibility of $X\cap H_0$. 

{\bf Claim:} The class of the cycle  $\delta_1$ is nonzero in $H_1(X\cap H_1, \mathbb C)$ if and only if 
$X\cap H_0$ is irreducible. 

{\bf Proof of Claim:} 
For simplicity, let $X_0:=X\cap H_0$ and $X_1:=X\cap H_1$. By the above discussion, $X_0$ is obtained 
from $X_1$
by attaching the 2-cell $\Delta_1$  along the local vanishing cycle $\delta_1$ and $X_0$ is homotopic to $X_1 \cup_{\delta_1} \Delta_1$.

By excision the homologies of the pair  $(X_1\cup \Delta_1,X_1)$ are isomorphic to 
the homologies of the pair $(\Delta_1,\delta_1)$, which in turn are isomorphic to the homologies of the pair $(D^2,S^1)$.  

Hence $H_2(X_1\cup \Delta_1,X_1) \cong  H_2(\Delta_1,\delta_1)$ (by excision) is a $\mathbb{C}$-vector space of dimension one generated by $(\Delta_1, \delta_1)$. Moreover, for $i\ne 2$,
 $H_i(X_1\cup \Delta_1,X_1)=0$.

Considering  the long exact sequence associated to the pair $(X_1\cup \Delta_1,X_1)$, we obtain 
\[ 0 \to H_2(X_1, \mathbb C) \to H_2(X_0, \mathbb C) \to H_2(X_1\cup \Delta_1,X_1) \cong \mathbb C \to H_1(X_1, \mathbb C) \to H_1(X_0, \mathbb C) \to 0. \]

Note that 
the boundary map $ H_2(X_1\cup \Delta_1,X_1) \cong \mathbb C \to H_1(X_1, \mathbb C)$ sends the generator $(\Delta_1, \delta_1)$ to  $\delta_1$. 

Now if $X_0$ is reducible then  $H_2(X_0, \mathbb C)$ has dimension at least $2$. On the other hand  $H_2(X_0, \mathbb C)$  fits in an exact sequence with initial term  $H_2(X_1, \mathbb C) \cong \mathbb C$ and the next term $H_2(X_1\cup \Delta_1,X_1) \cong\mathbb C$, hence has dimension at most $2$. This implies  that $X_0$ is reducible if and only if   $H_2(X_0, \mathbb C)$ has dimension $2$, i.e., $X_0$ has exactly $2$ irreducible components. Now  $H_2(X_0, \mathbb C) \to \mathbb C$ is surjective as dimension of $H_2(X_0, \mathbb C)$ is $2$ and the kernel has dimension $1$.  Hence the class of $\delta_1$ must be zero in $H_1(X_1, \mathbb C)$ by the exactness of the sequence. 

Conversely, if the class of $\delta_1$ is zero in $H_1(X_1, \mathbb C)$, then the map $\mathbb C\to H_1(X_1, \mathbb C)$ is the zero map. Hence the dimension of $H_2(X_0, \mathbb C)$ is 2 and $X_0$ is reducible. 
So the claim is proved. 

Now recall that,
by Theorem \ref{main-top1}, the hyperplane sections corresponding to points in $U$ are either all irreducible or all reducible, as they are homeomorphic to each other. 
So by the above claim, the classes of vanishing cycles in $H_1(X\cap H_1, \mathbb C)$ 
are all zero or all non-zero. 

Let $C$ be a general fibre of $p_2$. We can transport the local vanishing cycles to $C$ and this induces isomorphisms on $H_1$, so whether a homology class is zero does not depend on the choices made.

By \cite[\S 7.2]{L}, the kernel of the map $H_1(C,\mathbb C) \to H_1(X, \mathbb C)$ is generated by local vanishing cycles. 
By the Lefschetz hyperplane theorem, the 
map $H_1(C,\mathbb C) \to H_1(X, \mathbb C)$ is surjective. 
Now we have the following:

\begin{align*}
  \text{The fibres over } U \text{ are all reducible}
    \;&\overset{\text{Claim}}{\Longleftrightarrow}\;
    \begin{aligned}[t]
      & \text{classes of vanishing cycles in } H_1(C, \mathbb C)\\
      &\text{are all zero}
    \end{aligned} \\[1ex]
    &\overset{\text{\cite[\S 7.2]{L}}}{\Longleftrightarrow}\;
    \text{kernel of } H_1(C,\mathbb C) \to H_1(X, \mathbb C)  \text{ is zero} \\
    &\Longleftrightarrow\;
    \dim H_1(C,\mathbb{C}) = \dim H_1(X,\mathbb{C}).
\end{align*}

Thus if $\dim H_1(C,\mathbb{C}) > \dim H_1(X,\mathbb{C})$, then all the fibres over $U$ are irreducible. By the construction in Section \ref{bertini}, fibres over $U$ have a unique singularity and it is a node. So 
$R = \widecheck{X}\setminus U$ has the required properties. 

Conversely, suppose that  $\dim H_1(C,\mathbb{C}) = \dim H_1(X,\mathbb{C})$. Then the fibres over $U$ are all reducible. 
Note that $U$ is a dense open subset of $\widecheck{X}$ and
$\widecheck{X}$ is a hypersurface in $\widecheck{\mathbb P^n}$ and 
if $R$ is any subset of $\widecheck{\mathbb P^n}$ of codimension at least 2, then $R$ cannot contain $U$. So the complement of $R$ contains a point of $U$ and the fibre over this point is reducible.  Consequently $R$ does not have the desired property. 

This completes the proof.
\end{proof}




\begin{remark}
A version of the statement of Theorem \ref{main-top2} has been proved in \cite[Proof of Theorem 7.5]{CKMS}.   
In that paper it is shown that for a given very ample line bundle $L$ on a smooth complex projective surface $X$, a suitable multiple of $L$ contains a pencil of 
irreducible and reduced curves
(for their argument, $3L$ works). In order to prove their result, they use the notion of 2-connected divisors. We note that our proof of Theorem \ref{main-top2} is very different as it uses topological methods. Moreover, we give an explicit criterion on $L$ that ensures the existence of such pencils. In remarks below, we look at pairs $(X,L)$ for which the required condition fails. 

For more details on how the pencils of irreducible curves are used in \cite{CKMS}, see Remark \ref{fujita}. 
\end{remark}

\begin{example}
Let $X=\mathbb{P}^2$ and $L = \mathcal{O}_X(2)$. Then a general pencil of conics in $\mathbb P^2$ contains a singular conic which must be union of two lines. Note that 
the inequality \eqref{ineq} fails for $(\mathbb{P}^2,\mathcal{O}_X(2))$. On the other hand, it is satisfied for $(\mathbb{P}^2,\mathcal{O}_X(d))$ when $d\ge 3$.
\end{example}

\begin{example}\label{3.3}
It is interesting to study pairs $(X,L)$ for which the 
inequality \eqref{ineq}
fails. In other words, the following natural map is an isomorphism for a general curve $C \in |L|$: 
\begin{equation*}
H_1(C,\mathbb C) \to H_1(X,\mathbb C).    
\end{equation*}

This is the case, for example, when $X= \mathbb P^2$ and $L = \mathcal{O}_X(1)$ or
 $L = \mathcal{O}_X(2)$. Another example is when $X$ is a smooth quadric in $\mathbb P^3$ and $L$ is the pullback of 
 $\mathcal{O}_{\mathbb P^3}(1)$.  
 
 Also if $B$ is a curve and
$V$ is a rank-2 vector bundle on $B$ then $X= {\mathbb P}(V)$ and 
$L = \mathcal{O}_X(1)\otimes A$, where $A$ is sufficiently  ample  line bundle on $B$ will provide another example.
\end{example}

\begin{remark} \label{3.4}
The inequality \eqref{ineq} holds for many pairs $(X,L)$ of surfaces of $X$ and very ample line bundles $L$ on $X$. For instance, consider surfaces $X$ of non-negative Kodaira dimension $(\kappa(X) \ge 0)$ such that $q:=h^1(X,\mathcal{O}_X) \le 1$. 
Note that $\text{dim }H_1(X,\mathbb C)=2q$.
Let $L$ be a very ample line bundle on $X$ and let $C \in |L|$ be a general curve. Let $g$ be the genus of $C$.  Then $\text{dim }H_1(C,\mathbb C) = 2g$. 

By the adjunction formula, we obtain 
\[ C^2+K_X\cdot C = 2g-2\]
\end{remark}
Since $C$ is very ample and $mK_X$ is effective for $m\gg 0$, $C^2+K_X\cdot C > 0$. Hence $g\ge 2 > q$. 
So the inequality \eqref{ineq} holds for any surface with $q\le 1$ and 
$\kappa(X) \ge 0$.

Examples of surfaces covered by Remark \ref{3.4} 
include K3 surfaces, Enriques surfaces, and hypersurfaces in $\mathbb{P}^3$ of degree at least 4. 

\begin{remark}\label{3.5}
We can extend the argument in Remark \ref{3.4} to surfaces $X$ with $q=2$ and $\kappa(X) \ge 0$. Let $L$ be a very ample line bundle on $X$. As before, we have 
\[ L^2+K_X\cdot L = 2g-2.\]
We claim that $g > q=2$, as required. 

We first show that $L^2 \ge 3$. Indeed, $L^2$ is the degree of the embedding of $X$ in a projective space $\mathbb P^n$ determined by $L$. We have $L^2 \ge 2$ since this embedding is nondegenerate. In fact, it is well-known that the degree is at least $n-2+1=n-1$. So if $L^2=2$, then $n=3$. So $X$ is a quadric in $\mathbb P^3$, which is not possible since we assumed that $q=2$. Note that the case of a quadric in $\mathbb P^3$ has been covered by Example \ref{3.3}. Hence $L^2 \ge 3$, as claimed. 

Now since $mK_X$ is effective for $m \gg 0$, we have \[2g-2= L^2+K_X\cdot L \ge L^2 \ge 3.\] Hence $g\ge 3$, as required. 
\end{remark}
Examples of surfaces covered by Remark \ref{3.5} include abelian surfaces. 
\begin{remark}
If $L$ is ample, but not very ample it is possible that \eqref{ineq} fails outside the cases covered in Example \ref{3.3}. For example, let $X = J(C)$ be the Jacobian of a genus 2 curve $C$ and let $\Theta$ be the theta divisor on $X$. Then $\Theta$ is ample, but not very ample. Note that 
$\Theta^2=2$ and $h^0(X, \Theta)=1$. 
Moreover, $C$ is isomorphic to the unique curve in the linear system $|\Theta|$. So in this case, we get $g=q=2$. 

\end{remark} 

\begin{remark}
    In fact, all {\it quasi-polarized} surfaces $(X,L)$ (i.e., $L$ is a nef and big divisor on a smooth complex projective surface $X$) which satisfy the equality $g=q$ have been completely classified: for the case $\kappa(X) \le  1$ see \cite[Theorem 3.1]{F2} and for the case $\kappa(X)=2$ see \cite[Theorem 4.2]{F1}. 

In this classification the only cases where $L$ is very ample are the ones listed in Example \ref{3.3}.
 \end{remark}
 


\begin{remark}
Suppose that $X$ is a smooth complex projective variety of dimension $n \ge 3$ and let $L$ be a very ample line bundle on $X$. Let $Y \in |L|$ be a member of the linear system of $L$. Then a simple dimension count shows that if $Y$ is reducible then the singular locus of $Y$ has dimension at least 1: if $Y=Y_1\cup Y_2$ where $Y_1$ and $Y_2$ have same dimension as $Y$, then the dimension of $Y_1 \cap Y_2$ is at least $n-1+n-1-n = n-2 \ge 1$. Clearly $Y_1\cap Y_2$ is contained in the singular locus of $Y$. 

On the other hand, it is a standard fact that members of a general pencil of a very ample line bundle on a smooth variety have at worst isolated singularities. So on a smooth variety of dimension at least 3, a general pencil of hyperplane sections consists of irreducible and reduced members. 
\end{remark}

 \section{Construction of a general pencil containing a given curve}\label{pencil}
We now show that we can construct a pencil of reduced and irreducible curves containing a given curve. 

\begin{theorem}\label{main-top}
Let $X$ be a nonsingular complex projective surface embedded non-degenerately in $\mathbb P^n$.
Suppose that the following inequality holds for a general hyperplane $H$ in $\mathbb P^n$:
\[ h^1(X\cap H, \mathcal{O}_{X\cap H}) > 
h^1(X, \mathcal{O}_{X}). 
\]
Let $C \subset X$ be any hyperplane section. Then there exists a pencil $\mathcal{P}$ of curves on $X$ satisfying the following: 
\begin{enumerate}
\item $C \in \mathcal{P}$. 
\item Every curve in $\mathcal{P}$ different from $C$ is irreducible and reduced. 
\end{enumerate}
Moreover, if $C$ is reduced then the pencil $\mathcal P$ can be chosen to additionally satisfy the following condition: 
\begin{enumerate}
    \item[(3)]  Any two distinct curves in $\mathcal{P}$ intersect transversally. 
\end{enumerate}
 
 \end{theorem}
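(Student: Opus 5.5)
Translate the hypothesis into \eqref{ineq}: for a smooth curve $X\cap H$ of genus $g$ we have $\dim H_1(X\cap H,\mathbb C)=2g=2h^1(\mathcal O_{X\cap H})$, and $\dim H_1(X,\mathbb C)=2q=2h^1(\mathcal O_X)$. Theorem \ref{main-top2} then provides a closed subset $R\subset \widecheck{\mathbb P^n}$ of codimension at least $2$ (namely $R=\widecheck X\setminus U$) such that every hyperplane $H\notin R$ gives a curve $X\cap H$ that is either smooth or irreducible with a single node. Pencils in $|L|$ are lines in $\widecheck{\mathbb P^n}$. Let $H_C$ be the point corresponding to $C$. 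A line through $H_C$ is determined by a point of the $\mathbb P^{n-1}$ of directions at $H_C$. The lines through $H_C$ that meet $R$ at some point other than $H_C$ are exactly those lying in the cone over $R$ with vertex $H_C$. This cone has dimension at most $\dim R+1\le n-1$, so it is a proper closed subset of $\widecheck{\mathbb P^n}$. Hence a general line $\ell$ through $H_C$ meets $R$ at most in $H_C$. Take $\mathcal P=\ell$: condition (1) holds, and every other member $X\cap H$ with $H\in\ell\setminus\{H_C\}$ lies outside $R$, so it is irreducible and reduced. This gives (2).

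For (3), assume $C$ is reduced. Any two distinct members of $\mathcal P$ meet exactly in the base locus $C\cap C'$, where $C'$ is a second member. Transversality at a base point $p$ means that both curves are smooth at $p$ with distinct tangents. At a base point of a pencil, two members having distinct tangents is a property of the pencil itself and does not depend on which pair of members we choose. So it suffices to choose $H'$ such that the hyperplane section $X\cap H'$ meets $C$ transversally in $L^2$ distinct points, all of which are smooth points of $C$, and such that the line $\ell=\overline{H_CH'}$ is also general in the sense of the previous paragraph.

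Since $C$ is reduced, $\mathrm{Sing}(C)$ is finite. The set of hyperplanes $H'$ passing through some point of $\mathrm{Sing}(C)$ is a finite union of hyperplanes in $\widecheck{\mathbb P^n}$. The set of $H'$ tangent to $C$ at a smooth point is the dual variety of each component of $C$, which has dimension at most $n-2$. Both sets are proper closed subsets, so a general $H'$ avoids them and also gives a general line through $H_C$. For such $H'$ all intersection points are smooth points of both $C$ and $C'=X\cap H'$, with $(C\cdot C')_p=1$ at each. Therefore every pair of members of $\mathcal P$ meets transversally at each of the $L^2$ base points.

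The main obstacle is the dimension count in the first step. One has to be careful that $R$ has codimension at least $2$ in $\widecheck{\mathbb P^n}$ and not merely in $\widecheck X$. This is what makes the lines through a fixed, possibly special, point $H_C$ generically miss $R$; this holds even when $H_C\in R$. The other place requiring care is the tangency locus in the proof of (3): it must have dimension at most $n-2$ even when some component of $C$ is a line, in which case its dual is a $\mathbb P^{n-2}$, still of the right dimension.
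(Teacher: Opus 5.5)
Your proposal is correct and follows the paper's proof: the projection (equivalently, cone) dimension count from the point $H_C$ shows that a general line through $H_C$ misses the codimension-$2$ set $R=\widecheck{X}\setminus U$ except possibly at $H_C$. For (3) you pick a second member meeting $C$ transversally and use that transversality at the base points passes to every pair of members, giving more detail than the paper on why a general hyperplane section meets a reduced $C$ transversally. One slip, harmless for the argument: the locus of hyperplanes tangent to a non-linear component of $C$ is its dual variety, which is a hypersurface of dimension $n-1$, not of dimension $\le n-2$ (only a line has dual $\mathbb P^{n-2}$); you only need this locus to be a proper closed subset, which it is.
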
 
\begin{proof}
Let $p \in \widecheck{\mathbb P^n}$ be the point corresponding to $C$. 
Consider the subset $R \subset \widecheck{\mathbb P^n}$ constructed in Theorem \ref{main-top2}.  Note that $R$ has codimension at least 2. We claim that a general line in $ \widecheck{\mathbb P^n}$
through $p$ meets $R$ at most at $p$. 

Consider the projection from $p$:  $$\pi_p: \widecheck{\mathbb P^n} \setminus \{p\} \to \mathbb P^{n-1},$$ 
where the target space $\mathbb P^{n-1}$ is the projective space parametrizing the lines in $\widecheck{\mathbb P^n}$ through $p$. Let $\overline{R} \subset \mathbb P^{n-1}$ be the closure of the image of $R\setminus \{p\}$. Then $\overline{R}$ has codimension at least 1. Hence the complement of $\overline{R}$ in $\mathbb P^{n-1}$ is a nonempty open set. Choose a point in the complement and the corresponding line $L$ 
in $\widecheck{\mathbb P^n}$. Then $L$ passes through $p$ and does not intersect $R\setminus \{p\}$. 

So this gives the required pencil satisfying the conditions {\it (1)} and 
{\it (2)}. 

Suppose further that $C$ is reduced. Then a general hyperplane section meets $C$ transversally. So intersecting $C$ transversally is an open condition. Further if $C$ and $C'$ meet transversally, then any two distinct curves in the pencil spanned by $C$ and $C'$ also meet transversally. Hence a general line through $p$ in  
$ \widecheck{\mathbb P^n}$ corresponds to a pencil in $X$
satisfying the condition {\it (3)}. 
\end{proof}

\section{An irreducible curve with prescribed multiplicity at a point}\label{high-mult}
The following is well-known, but we include a proof for completeness. 
\begin{lemma} \label{existence}
Let $X$ be a nonsingular complex projective surface with a very ample line bundle $\mathcal{O}_X(1)$ and let $x \in X$ be a point. Let $m$ be a positive integer.  Then there exists an integer $d_0$ such that for all $d \ge d_0$, 
the linear system $|\mathcal{O}_X(d)|$ contains 
an irreducible and reduced curve $C$ with ${\rm mult}_{x}C= m$ and which is nonsingular on $X \setminus \{x\}$.
\end{lemma}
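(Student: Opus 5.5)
We work on the blow up $\pi\colon \widetilde X \to X$ of $X$ at $x$, with exceptional curve $E$, and consider the line bundle
\[
N_d := \pi^*\mathcal{O}_X(d) - mE .
\]
Curves in $|N_d|$ that do not contain $E$ are strict transforms of curves in $|\mathcal{O}_X(d)|$ with multiplicity at least $m$ at $x$. We then take a general member of a suitable subsystem and check irreducibility, reducedness, the multiplicity bound and smoothness away from $x$.

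The first step is to show that $N_d$ is base point free, and in fact very ample, for $d$ large. Since $\mathcal{O}_X(1)$ is very ample, the bundle $\pi^*\mathcal{O}_X(1)-E$ is globally generated, because sections of $\mathcal{O}_X(1)$ vanishing at $x$ separate tangent directions. Hence $A:=\pi^*\mathcal{O}_X(k)-E$ is ample for some fixed $k$ (for instance $k=2$ works). Write
\[
N_d = m A + \pi^*\mathcal{O}_X(d-mk).
\]
This is ample plus nef once $d\ge mk$. By Fujita/Serre vanishing, or by the standard fact that (very ample) $\otimes$ (globally generated) is very ample, $N_d$ is very ample for $d \ge d_0$.

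Next, let $\widetilde C\in|N_d|$ be general. By Bertini, $\widetilde C$ is smooth and irreducible, since a very ample system on a surface has connected, hence irreducible, smooth general members. It does not contain $E$, and it meets $E$ transversally in $N_d\cdot E = m$ distinct points. The image $C=\pi(\widetilde C)$ is then an irreducible reduced curve in $|\mathcal{O}_X(d)|$. It is isomorphic to $\widetilde C$ away from $x$, so it is nonsingular on $X\setminus\{x\}$. Its multiplicity at $x$ equals $\widetilde C\cdot E = m$.

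The only delicate point is making sure the multiplicity is exactly $m$ rather than larger. This is automatic from the equality $\mathrm{mult}_x C = \widetilde C\cdot E$ once we know $E\not\subset \widetilde C$, which holds because $\widetilde C$ is irreducible and not equal to $E$ (their classes differ). So the real work is the positivity statement for $N_d$. If one wants to avoid citing Fujita-type results, an alternative is to show directly that $H^1(\widetilde X, N_d - \text{(point conditions)})=0$ for $d\gg0$ via the sequence $0\to \mathcal{I}_x^{m}(d)\to\mathcal{O}_X(d)\to \mathcal{O}_X(d)/\mathcal{I}_x^m\to0$ and Serre vanishing. This gives the separation of points and tangents needed for very ampleness away from and along $E$.
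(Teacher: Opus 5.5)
Your strategy is the paper's: blow up $x$, take a general member $\widetilde C$ of $|\pi^*\mathcal{O}_X(d)-mE|$, get smoothness from Bertini and irreducibility from connectedness, and read off $\mathrm{mult}_x C=\widetilde C\cdot E=m$. The gap is in the step you yourself call the real work, namely the very ampleness of $N_d$.

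Knowing only that $A=\pi^*\mathcal{O}_X(k)-E$ is ample, and that $N_d=mA+\pi^*\mathcal{O}_X(d-mk)$ is ample plus nef, does not give very ampleness, for three reasons:
\begin{enumerate}
\item[(a)] $m$ is fixed, so Fujita vanishing does not apply, since it needs a large multiple of the ample class.
\item[(b)] ``Ample plus nef'' is not very ample in general: a nef class pulled back from a contraction contributes nothing on the contracted curves.
\item[(c)] ``Very ample $\otimes$ globally generated'' needs $mA$ to be very ample, which you never established.
\end{enumerate}
The Serre-vanishing alternative in your last paragraph is also incomplete. Surjectivity of $H^0(\mathcal{O}_X(d))\to\mathcal{O}_X(d)/\mathcal{I}_x^m$ says nothing about separating points and tangent vectors on $\widetilde X$. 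That would need vanishing of $H^1(\mathcal{I}_x^m\mathcal{I}_\xi(d))$, and analogous conditions along $E$, uniformly over all length-two subschemes $\xi$.

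The repair is short and uses what you already have. There are two options.
\begin{enumerate}
\item[(i)] Observe that $\pi^*\mathcal{O}_X(2)-E$ is in fact very ample. Indeed, $\widetilde X$ is the closure of the graph of the projection from $x$ in $\mathbb{P}^n\times\mathbb{P}^{n-1}$, and this bundle is the restriction of $\mathcal{O}(1,1)$. So with $k=2$ the summand $mA$ is very ample, and $N_d$ is very ample for $d\ge 2m$.
\item[(ii)] More economically, drop very ampleness: Bertini needs only base point freeness, and connectedness needs only ampleness. Your observation that $\pi^*\mathcal{O}_X(1)-E$ is globally generated shows that $N_d=m(\pi^*\mathcal{O}_X(1)-E)+\pi^*\mathcal{O}_X(d-m)$ is globally generated for $d\ge m$. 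Nakai--Moishezon then gives ampleness for $d>m$.
\end{enumerate}

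Option (ii) is exactly the shape of the paper's proof. The paper instead gets global generation from Serre's theorem applied to $\mathcal{M}^m\otimes\mathcal{O}_X(d)$, so its $d_0$ is not explicit, and gets ampleness from $\varepsilon(X,\mathcal{O}_X(1),x)\ge 1$. Your route would give the explicit bound $d_0=m+1$.
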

\begin{proof}
Let $ \mathcal{M}$ denote the ideal sheaf of $x$. Since $\mathcal{O}_X(1)$ is ample, there exists $d_0$ such that 
the sheaf $\mathcal{M}^m \otimes \mathcal{O}_X(d)$ is globally generated for $d \ge d_0$. 

Consider the blow up $\pi: \tilde{X} \to X$ of $X$ at $x$, with the exceptional divisor $E$. Consider the line bundle  $L_d: = \pi^{\star}\mathcal{O}_X(d)-mE$ on $\tilde{X}$.
Then there is a natural surjective map 
$$ \pi^{\star}\mathcal{M}^m \to \mathcal{O}_{\tilde{X}}(-mE).$$ Twisting by 
$\pi^{\star}\mathcal{O}_X(d)$, we obtain a surjective map 
$$ \pi^{\star}(\mathcal{M}^m \otimes\mathcal{O}_X(d)) \to L_d.$$
Since $\mathcal{M}^m \otimes\mathcal{O}_X(d)$ is globally generated, so is $\pi^{\star}(\mathcal{M}^m \otimes\mathcal{O}_X(d))$. Then $L_d$ is also globally generated for $d \ge d_0$ since it is a quotient of a globally generated sheaf.

We also note that $L_d$ is ample for $d > m$. This follows, for example, since 
the Seshadri constant $\varepsilon(X,\mathcal{O}_X(1),x) \ge 1$ as 
$\mathcal{O}_X(1)$ is very ample. 
By choosing $d_0 > m$ if necessary, we conclude that $L_d$ is ample and globally generated for $d \ge d_0$. 

Since $L_d$ is globally generated, by Bertini, a general curve $\tilde{C} \in |L_d|$ is smooth. Since $L_d$ is ample, every curve in the linear system $|L_d|$ is connected. So $\tilde{C}$ is smooth and connected and hence it is irreducible and reduced. 
Then $C = \pi(\tilde{C})$ has the required properties: $C$ is irreducible, reduced, and smooth outside $x$. Further, ${\rm mult}_{x}C= \tilde{C}\cdot E = m$. 
\end{proof}

 \section{Seshadri constants}\label{sc}

We now prove our main theorem about Seshadri constants. 

\begin{theorem}\label{main-sc}
Let $X$ be a nonsingular complex projective  surface and let $\delta$ be a positive real number. Then there exists a blow up $\pi: Y \to X$ of $X$ at finitely many points in $X$, an ample line bundle $L$ on $Y$ and 
 a point $x \in Y$ such that $\varepsilon(Y,L,x) < \delta$. 
\end{theorem}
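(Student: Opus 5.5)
Fix a very ample line bundle $\mathcal{O}_X(1)$ on $X$, replaced if necessary by a multiple so that the inequality of Theorem \ref{main-top} holds for the embedding it defines. Taking $\mathcal{O}_X(k)$ with $k\ge 3$ works: the genus of a general member grows without bound in $k$, while $h^1(X,\mathcal{O}_X)$ stays fixed. Choose a point $x_0\in X$ and a large integer $m$. By Lemma \ref{existence}, for $d\ge d_0$ the system $|\mathcal{O}_X(d)|$ contains an irreducible reduced curve $C$ with $\mathrm{mult}_{x_0}C=m$ that is smooth away from $x_0$. The inequality \eqref{ineq} still holds for $\mathcal{O}_X(d)$ once $d$ is large, because multiples of a very ample bundle only increase the genus. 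So Theorem \ref{main-top}, applied to the embedding by $\mathcal{O}_X(d)$, gives a pencil $\mathcal P\ni C$. All other members of $\mathcal P$ are irreducible and reduced, and any two distinct members meet transversally. In particular the base locus consists of $d^2\mathcal{O}_X(1)^2$ distinct points, and since $C$ is singular only at $x_0$, transversality forces $x_0$ not to be a base point.

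Let $\pi:Y\to X$ be the blow up at the $N=d^2H^2$ base points, where $H=\mathcal{O}_X(1)$, with exceptional curves $E_1,\dots,E_N$. The strict transforms of the members of $\mathcal P$ form a base-point-free pencil, hence a fibration $f:Y\to\mathbb P^1$ with fibre class $F=\pi^*(dH)-\sum E_i$ and $F^2=0$. Every fibre is the strict transform of a member of $\mathcal P$, so every fibre is irreducible; $\tilde C$ is one of them, and it has multiplicity $m$ at $x:=\pi^{-1}(x_0)$. Following Miranda, set $L=a\,\pi^*H+F$ for a fixed integer $a\ge 1$, chosen so that $L$ is ample. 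To check ampleness I use Nakai--Moishezon: $L^2=a^2H^2+2adH^2>0$, $L\cdot E_i=1$, and for an irreducible curve $\Gamma$ not contracted by $\pi$ we have $L\cdot\Gamma\ge a\,\pi^*H\cdot\Gamma>0$, since $F$ is nef (it is a fibre class with $F^2=0$). So $L$ is ample for every $a\ge1$. Then
\[
\varepsilon(Y,L,x)\le\frac{L\cdot\tilde C}{\mathrm{mult}_x\tilde C}=\frac{a\,\pi^*H\cdot F+F^2}{m}=\frac{a\,d\,H^2}{m}.
\]

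The main obstacle is that $d$ depends on $m$ through Lemma \ref{existence}, so making this ratio small is not automatic. First I would try to bound $d_0$ linearly in $m$. Global generation of $\mathcal M^m\otimes\mathcal{O}_X(d)$ holds for $d\ge m+c$ with $c$ independent of $m$, for instance via Castelnuovo--Mumford regularity on the blow up, or from $\varepsilon\ge1$ plus Fujita-type vanishing. This gives $d\approx m$, and then the ratio is only about $aH^2$, which is not small. To fix this, I would exploit the fact that the fibres are irreducible, so the fibration contributes freely, and rescale the roles: take $L=\pi^*H+bF$ with $b$ large. Then $L\cdot\tilde C=dH^2$ while $L^2$ grows like $2bdH^2$, so $\varepsilon/\sqrt{L^2}\to0$. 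This is the relevant smallness, because rescaling $L$ by an integer $k$ scales $\varepsilon$ by $k$.

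For the absolute statement $\varepsilon(Y,L,x)<\delta$, I would use the other degree of freedom: pick $C$ with very high multiplicity relative to a fixed degree. Here the Seshadri constant $\varepsilon(X,H,x_0)$ bounds $m\le dH^2$. So instead I would pass to an $m$-fold branched structure, or simply take $L=\pi^*H+F$ and compare with a fibre through a point of multiplicity $m$ while $H$ stays fixed. Concretely, I would choose $C\in|dH|$ with $m/d\to\sqrt{H^2}$ (a dimension count gives $m\sim d\sqrt{H^2}$), and use $L=\pi^*H+F$. Then $L\cdot\tilde C=dH^2$, and the ratio is only about $\sqrt{H^2}$, still not small. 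So the key quantity must be fixed via $\tilde C\cdot L$ with $L$ involving $-\sum E_i$. Taking $L=F+\epsilon'\pi^*H$ with rational $\epsilon'\to0$ (cleared of denominators) gives $\varepsilon\le\epsilon' dH^2/m$, which tends to $0$ while $L$ stays ample by the Nakai argument above. I expect this last step to be the real work: verifying ampleness of $F+\epsilon'\pi^*H$ carefully and controlling the scaling.
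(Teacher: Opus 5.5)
Your setup matches the paper: the curve $C$ from Lemma \ref{existence}, the pencil from Theorem \ref{main-top}, the blow up of all base points, and the fibration $f\colon Y\to\mathbb{P}^1$ with irreducible fibres. The choice of $L$, however, has a genuine gap, and your final step does not close it.

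The class $F+\epsilon'\pi^*H$ is only a $\mathbb{Q}$-divisor. Clearing denominators with $\epsilon'=1/k$ gives the line bundle $kF+\pi^*H$, whose Seshadri constant is $k$ times larger. The bound then returns to $(kF+\pi^*H)\cdot\tilde C/m=dH^2/m\ge 1$. If $\mathbb{Q}$-classes were allowed, the theorem would be vacuous: scale any ample class on $X$, with no blow up at all.

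In fact no bundle in your family can work once $\delta\le 1$. For integers $a,b\ge 0$ the bundle $a\pi^*H+bF$ is globally generated, since $\pi^*H$ and the base-point-free pencil $F$ are. An ample globally generated line bundle has Seshadri constant $\ge 1$ at every point. A related warning sign is that your Nakai argument never uses the irreducibility of the fibres, which is the entire output of Theorem \ref{main-top2}.

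The missing idea is to take the horizontal part of $L$ to be an exceptional curve rather than $\pi^*H$. Each $E_i$ is a section of $f$, so set $L=aF+E_1$ with $a\ge 2$; the paper writes this as $a\tilde C+E_1$. Then $L\cdot\tilde C=F\cdot E_1=1$ independently of $d$, so $\varepsilon(Y,L,x)\le 1/m$. The dependence of $d$ on $m$ in Lemma \ref{existence}, which you identified as the main obstacle, then becomes irrelevant: just choose $m>1/\delta$.

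Ampleness follows from Nakai:
\begin{itemize}
\item $L^2=2a-1>0$.
\item $L\cdot E_1=a-1>0$.
\item Any other horizontal curve $D$ has $L\cdot D\ge aF\cdot D>0$.
\item Every fibre $D$ is irreducible and reduced, so $D$ is the whole fibre and $L\cdot D=1$.
\end{itemize}
The last case is exactly where the irreducible pencil is indispensable. A reducible fibre would have a component missing $E_1$, and that component would have $L$-degree $0$.
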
 
\begin{proof}
Choose an integer $m \ge 2$ such that $\frac{1}{m} < \delta$. Let $C \subset X$ be an irreducible and reduced curve containing a point $x$ of multiplicity $m$; such a curve exists by Lemma \ref{existence}.
More precisely, $C$ is a section of a very ample divisor
$\mathcal{O}_X(d)$ vanishing at $x$ with multiplicity $m$. 
By increasing $d$ if necessary, we may also assume that the inequality \eqref{ineq} holds. 
Now choose a pencil $\mathcal{P}$ which contains $C$ and satisfies the properties of  
Theorem \ref{main-top}. 

Let $C' \in \mathcal{P}$, $C' \ne C$ and let $C \cap C' = \{p_1,\ldots, p_r\}$. Note that $r=C\cdot C'$ and each $p_i$ is a simple point of $C$ and $C'$. 

Let $\pi: Y \to X$ be the blow up of $X$ at $\{p_1,\ldots, p_r\}$. Let $\widetilde{C}$ denote the strict transform of $C$ and let $E_1= \pi^{-1}(p_1)$.  Let $L = a\widetilde{C}+E_1$, $a\ge 2$. We claim that $L$ is ample and $\varepsilon(Y,L,x) \le \frac{1}{m} < \delta$. 

The rest of the proof  follows from Miranda's construction, but we include the details here for completeness. 

We use Nakai's criterion to show the ampleness of $L$. First note that $$L^2 = a^2\widetilde{C}^2+2a\widetilde{C}\cdot E_1+E_1^2 = 0+2a-1 > 0.$$
Now let $D\subset Y$ be any irreducible and reduced curve.  We show that $L \cdot D > 0$. 

Note that the strict transform $\widetilde{\Pe}$ of $\Pe$ is base point free and determines a morphism $\phi: Y \to \mathbb{P}^1$. 

If $D$ is a fibre of $\phi$ then $D$ is contained in a member of 
the pencil  $\widetilde{\Pe}$ and by hypothesis all curves in  $\widetilde{\Pe}$ are reduced and irreducible. So $D$ itself is inside  $\widetilde{\Pe}$. Then 
$L\cdot D = L \cdot  \widetilde{C} = 1.$

Suppose now that $D$ is not a fibre of $\phi$. Then $\phi(D) = \mathbb{P}^1$. 
Let $p \in \mathbb{P}^1$. Then $F=\phi^{-1}(p)$ is a member of the pencil  $\widetilde{\Pe}$. Since $D$ is not a fibre, $D$ and $F$ are distinct and have non-empty intersection. 
So $D \cdot F  = D \cdot  \widetilde{C} > 0$. We also claim that $D \cdot E_1 \ge -1$. Indeed, 
if $D \cdot E_1 < 0$, then $E_1$ is a component of $D$. Since $D$ is irreducible and reduced, $D=E_1$. Then $D \cdot E_1 = E_1^2 = -1$. 
This gives $L \cdot D = a D \cdot  \widetilde{C} + D \cdot E_1 > 0$. 

This concludes the proof that $L$ is ample.

Now note that $x \notin C \cap C' = \{p_1,\ldots,p_r\}$. By abuse of notation, we denote the point in $Y$ above $x$ also by $x$. 
Then 
$$\varepsilon(Y,L,x) \le \frac{L \cdot \widetilde{C}}{m} = \frac{1}{m}  < \delta,$$ as required. 
\end{proof} 

\begin{remark}\label{equality} In the situation of Theorem \ref{main-sc}, we note that one in fact has the equality  
$\varepsilon(Y,L,x) = \frac{1}{m}$, where $m$ is the multiplicity of $C$ at $x$.  
We prove this by showing that the Seshadri 
ratio computed by any curve passing through $x$, different from $\widetilde{C}$, is larger than $\frac{1}{m}$. 

Let $\widetilde{C} \ne D \subset Y$ be any irreducible and reduced curve containing $x$. Then we have $$\widetilde{C} \cdot D \ge m{\rm mult}_{x}D.$$ Further, since $x \in D$, $D$ is not equal to $E_1$. 
Then $$L \cdot D = a\widetilde{C} \cdot D + D\cdot E_1 \ge am({\rm mult}_{x}D).$$ Hence 
$$ \frac{L \cdot D}{{\rm mult}_{x}D} \ge am >  \frac{1}{m}.$$

\end{remark}
Since it is obvious that $\varepsilon(dL,x) = d\varepsilon(L,x)$ for 
any ample line bundle $L$ and any positive integer $d$, 
using Remark \ref{equality} we obtain:
\begin{corollary} 
Let $X$ be a nonsingular complex projective  surface and let $\delta$ be a positive \textit{rational} number. Then there exists a blow up $\pi: Y \to X$ of $X$ at finitely many points in $X$, an ample line bundle $L$ on $Y$ and 
 a point $x \in Y$ such that $\varepsilon(Y,L,x) =\delta$. 
\end{corollary}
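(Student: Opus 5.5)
Write $\delta = p/q$ with $p,q$ positive integers. The plan is to run the construction in the proof of Theorem \ref{main-sc} with multiplicity $m=q$, so that Remark \ref{equality} gives an exact value $1/q$, and then scale the line bundle by $p$.

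\textbf{Step 1: the construction with $m=q$.} The proof of Theorem \ref{main-sc} assumes $m\ge 2$, only so that $1/m<\delta$. It never uses this elsewhere, so we run it with $m=q$. Lemma \ref{existence} holds for every positive integer $m$, including $m=1$. It gives a very ample $\mathcal{O}_X(d)$ with $d$ large enough that \eqref{ineq} holds, together with an irreducible reduced curve $C\in|\mathcal{O}_X(d)|$ with $\mathrm{mult}_x C=q$. Theorem \ref{main-top} then provides a pencil $\mathcal{P}\ni C$ satisfying (1)--(3). Let $Y\to X$ be the blow-up at the base points $C\cap C'$, and set $L=a\widetilde C+E_1$ with $a\ge 2$. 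The Nakai argument in that proof shows $L$ is ample.

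\textbf{Step 2: the exact value $\varepsilon(Y,L,x)=1/q$.} Remark \ref{equality} handles this. The curve $\widetilde C$ gives the ratio $L\cdot\widetilde C/q=1/q$. Now let $D\ne\widetilde C$ be an irreducible reduced curve through $x$. Then $D\ne E_1$, so $D\cdot E_1\ge 0$. Also $\widetilde C\cdot D\ge q\,\mathrm{mult}_x D$. Hence
\[
\frac{L\cdot D}{\mathrm{mult}_x D}\ge aq\ge 1/q.
\]
The infimum is therefore $1/q$, attained by $\widetilde C$. We should check that $\widetilde C\cdot D\ge \mathrm{mult}_x\widetilde C\cdot\mathrm{mult}_x D$ on $Y$. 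This holds because $x$ is not a base point of the pencil, so the multiplicity of $\widetilde C$ at $x$ is still $q$.

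\textbf{Step 3: scaling.} Replace $L$ by $pL$, which is still ample. Directly from the definition, $\varepsilon(Y,pL,x)=p\,\varepsilon(Y,L,x)=p/q=\delta$.

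The main obstacle is the case $q=1$, which the statement allows (for instance integer $\delta$). There we need Lemma \ref{existence} with $m=1$, and the bound in Remark \ref{equality} becomes $aq\ge 1=1/q$, which is still fine. Nothing else in the argument depends on $m\ge 2$, so the bound still gives equality.
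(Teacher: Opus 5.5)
Your route is the paper's: run the construction of Theorem \ref{main-sc} with $m=q$, use Remark \ref{equality} to get $\varepsilon(Y,L,x)=1/q$ exactly, and then replace $L$ by $pL$.

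The one inaccuracy is your claim that nothing else depends on $m\ge 2$. You need $x\notin C\cap C'=\{p_1,\dots,p_r\}$ for two things. First, it keeps the multiplicity of $\widetilde C$ at $x$ equal to $q$. Second, it gives $D\neq E_1$ and $D\cdot E_1\ge 0$ in Remark \ref{equality}. The paper justifies $x\notin C\cap C'$ by transversality, condition (3) of Theorem \ref{main-top}: every $p_i$ is a simple point of $C$, while $x$ has multiplicity $m\ge 2$ on $C$.

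When $q=1$ this argument gives nothing. $C$ is smooth at $x$, so a curve $C'$ meeting $C$ transversally may still pass through $x$. You assert that $x$ is not a base point but never justify it in this case. There are two easy fixes.
\begin{enumerate}
\item Impose one more open condition on the general line through $[C]$ in the dual projective space: it must not lie in the hyperplane of hyperplanes containing $x$. This is a nonempty open condition on the $\mathbb{P}^{n-1}$ of such lines, so it is compatible with the choices in Theorem \ref{main-top}, and it forces $x\notin C'$.
\item More simply, avoid $q=1$ by writing $\delta=2p/2q$. Then the construction is only ever used with $m\ge 2$, which is what the paper implicitly does.
\end{enumerate}
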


\begin{remark} It is easy to see that if $L$ is ample and globally generated, then $\varepsilon(L,x) \ge 1$ for all points $x$. 
Let $m$ be any positive integer and let $X$ be any surface. The construction in the proof of Theorem \ref{main-sc} gives an ample line bundle $L$ on a suitable blow up of $X$ such that $kL$ is not globally generated for $k < m$. 
\end{remark}

\begin{remark}\label{fujita}
The notion of a \textit{convex Fujita number} is studied in \cite{CKMS}. Let $X$ be a smooth complex projective surface. The convex Fujita number $\text{Fu}(X)$ is defined to be the smallest non-negative integer $m$ such that for all $s\ge m$ and any ample line bundles
$L_1,\ldots, L_s$, the adjoint bundle $K_X+L_1+\ldots+L_s$ is globally generated. 
Using Reider's result \cite{R}, one can show that
$0 \le \text{Fu}(X) \le 3$ for any surface $X$. 

In \cite[Theorem 7.5]{CKMS}, the authors show that for any surface $X$, there is a suitable blow up $X'$ of $X$ at finitely many points such that $\text{Fu}(X')= 3$. Their proof critically uses the existence of a pencil of 
reduced and 
irreducible curves.

Let $C, C'$ be reduced and irreducible curves on $X$ such that the pencil spanned by them consists of reduced and irreducible curves meeting transversally. 
Let $C \cap C' =\{p_1,\ldots,p_r\}$. 
The authors consider the blow up $X' \to X$ at $\{p_2,\ldots,p_r\}$. Let $M$ be the strict transform of $C$. Then the authors prove that

\begin{enumerate}
\item $M$ is ample, 
\item $K_{X'}+2M$ is not globally generated. 
\end{enumerate}

As a consequence, $\text{Fu}(X')= 3$.

By using our result, we can obtain additional properties of $M$. As in the proof of Theorem \ref{main-sc}, given an integer $m\ge 2$, let $C$ be an irreducible and reduced curve which admits a point $x$ of multiplicity $m$. Then, again as before, choose a pencil of irreducible and reduced curves containing $C$. Now we blow up the base locus of the pencil after removing a point as in \cite{CKMS}. Then the line bundle $M$ (defined as the strict transform of $C$ as above) will have the following properties in addition to (1) and (2). 
\begin{enumerate}
\setcounter{enumi}{2}
\item $\varepsilon(X',M,x) = 1/m.$
\item $kM$ is not globally generated for $k< m$. 
\end{enumerate}

Note that $K_{X'}+3M$ is globally generated by \cite{R}.

\end{remark} 


\section*{Acknowledgements}
	
	The first author is supported by 
a grant from Infosys Foundation and 
ANRF MATRICS grant (ANRF/ARGM/2025/002374/MTR). 
The second author is supported by KIIFB funding. 
A part of this work was done when the second author visited Chennai Mathematical Institute as Infosys Chair Professor. 
We are grateful to Ankit Rai for pointing out the work of Y. Fukuma \cite{F1,F2}, 
and to Alex K\"{u}ronya for informing us about the paper \cite{CKMS}. We also thank Mike Roth for useful discussions.

\end{document}